\newcommand{\beq}{\begin{equation}}
\newcommand{\pd}[2]{\dfrac{\partial #1}{\partial #2}}
\newcommand{\eeq}{\end{equation}}
\newcommand{\ut}{\tilde{u}}
\def\k0{\kappa_0}
\newtheorem{thm}{Theorem}[section]
\newtheorem{lem}[thm]{Lemma}
\newtheorem{prop}[thm]{Proposition}
\theoremstyle{definition}
\theoremstyle{Remark}
\numberwithin{equation}{section}
\title{Non-Autonomous Inertial Manifold Reduction\thanks{This work was supported in part by NSF grants \# DMS-1115408 and \# DMS-1419047.}}
\author{
Yu-Min Chung
\thanks{Department of Mathematics, College of William and Mary
{\tt (email: ychung@wm.edu)}.}
\and
Andrew Steyer
\thanks{Department of Mathematics, University of Kansas, Lawrence, KS 66045 USA
{\tt (email: asteyer@math.ku.edu)}.}
\and
Erik S. Van Vleck
\thanks{Department of Mathematics, University of Kansas, Lawrence, KS 66045 USA
{\tt (email: erikvv@ku.edu)}.}
}
\begin{document}

\maketitle

\begin{abstract}
Techniques are developed for decoupling dissipative differential equations. 
The approach considered is based upon obtaining a sufficient
gap in the time dependent linear portion of the equation that corresponds to the linear
variational equation. This is done using an orthogonal change of variables that has proven
useful in the computation of Lyapunov to decompose the differential equation in terms of
slow and fast variables. Numerically this is accomplished in our implementation using smooth,
time dependent Householder reflectors. The the nonlinear decoupling transformation or inertial
manifold is obtained by solving a boundary value problem (BVP) which allows for a Newton iteration
as opposed to the traditional Lyapunov-Perron approach via a fixed point iteration. Finally, the
efficacy of the technique is shown using some challenging examples.
\end{abstract}

\vskip -1.0in

\section{Introduction}


In this paper we develop numerical techniques for decoupling of dissipative nonlinear differential equations.
To address problems in which the given differential equation is not easily written in terms of
linear part that is decoupled, we form in a canonical way a time dependent linear part based upon
the coefficient matrix of the linear variational equation. Decoupling of differential equations to
obtain reduced dimensional problems that retain the essential dynamics is an important topic and
forms the basis for inertial manifold techniques. 

Our contribution in this paper is to develop decoupling transformations for nonlinear differential
equations. The motivation for our techniques comes from inertial manifold techniques, but with the
important difference that we form in a canonical way a time dependent linear part using the coefficient
matrix of the linear variational equation. This allows for decoupling of the linear (time dependent)
part using techniques that have proven useful for approximation of Lyapunov exponents.
In this paper we combine techniques that have proven useful in the calculation of 
Lyapunov exponents and Sacker-Sell spectrum with techniques that are a variation
of decoupling or inertial manifold techniques. 
We employ a boundary value problem (BVP) formulation to determine the graph that defines the inertial
manifold. This has several advantages over the classical Lyapunov-Perron technique. These include
solving the associated differential equations directly without the need for the computation of fundamental matrix
solutions and the nonlinear variation of constants formula. The method is easy to apply using standard BVP solvers
which allow for easy use of Newton iteration as opposed to fixed point iteration and allows for straightforward use of
continuation techniques.

Inertial manifolds, which are finite dimensional, exponentially attracting, 
positively invariant Lipschitz manifolds, play an important role in studying long time behaviors of dynamical systems.  
The dynamical system restricted on the inertial manifold yields an {\it inertial form reduction}, which shares all long-term dynamics of the original system.  Exploiting this theory can be of enormous practical importance.  Detailed simulations, stability and bifurcation calculations can be performed on the inertial form at a small fraction of the computational effort required to perform them on large-scale discretizations of the original systems.  Due to its importance, there are enormous amounts of literature about the theory of inertial manifolds.  The existence of inertial manifolds has been established for a variety of partial differential equations, such as the Kuramoto-Sivashinsky equation \cite{FNST88, CFNT89, dim_KSE_inertial_Wang}, Ginsburg-Landau equation \cite{NFT89}, Cahn-Hilliard equation \cite{NST89, NFT89}, many reaction diffusion equations \cite{MPS88, Jolly89}, and the sabra shell model of turbulence \cite{CLT06}.  The computation of inertial manifolds and approximate inertial manifolds has been carried out and studied by \cite{ChungJolly, JRT, center_manifold,AIFKSE, ComputingInertial2, ComputingInertial3, JauberteauRosierTemam90, FJKST88}. The theory of inertial manifolds has been generalized to non-autonomous dynamical system \cite{ComputingInertial4, CL97, GV97,koksch2002inertial}.

We take the approach here of decoupling the time dependent linear part of the equation
using techniques that have proven useful in the approximation of Lyapunov exponents.
We first employ an orthogonal change of variables $Q(t)$ that brings the time dependent
coefficient matrix for the linear part of the equation to upper triangular. Subsequently,
we will compute a change of variables that decouples the linear part . 
This then gives us equations of the form considered by Aulbach and Wanner 
in \cite{AulbachWanner}.
A similar change of 
variables has been employed to justify that Lyapunov exponents and Sacker-Sell spectrum
may be obtained from the diagonal of the upper triangular coefficient matrix 
(see section 5 of \cite{DVV02} and sections 4 and 5 of \cite{DVV07}).
The references \cite{DVVEncyc} and \cite{DJVV11} (see also the references therein) 
provide a summary and overview
of recent work on approximation of Lyapunov exponents and in obtaining the 
orthgonal change of variables $Q(t)$. In this paper we consider the smooth orthogonal change of variables
based upon continuous Householder reflectors as developed in \cite{DV1,DV2}.



The outline of this paper is follows. In section \ref{basicsetup} we outline the way in which we
reduce the original initial value differential equation via two time dependent change of variables
to a nonlinear, nonautonomous system with decoupled time dependent linear part. The basic algorithms
we will implement and further background justifying our technique are outlined in section \ref{basicalg}. These are iterative techniques that in
one case update the the time dependent linear part and decoupling change of variables and in the 
other case freeze these time dependent factors. In  section \ref{Imp} we provide details of our implementation, including a
review of time dependent Householder transformations and culminating with with an outline of the algorithm we have implemented.
In section \ref{dependenceonT} 
we investigate the dependence of the convergence on $T \approx \infty$, and how to determine $T$.
The applicability of our techniques is first shown for several standard test problems in 
section \ref{numericalresults} before we
turn our attention to several challenging nonlinear problems. We conclude with
a summary of the theoretical and numerical results we have obtained and with some directions for
future research. 

\section{Basic Setup}\label{basicsetup}

Consider the initial value differential equation
\beq\label{nonlinDE}
\dot u = f(u,t),\,\, u(t_0)=u_0
\eeq
where $f:U \times (t_0,\infty) \mathbb{R}^d$, $t > t_0$, and $U$ is some open subset of $\mathbb{R}^d$ with $u_0 \in \mathbb{U}$.  Denote the solution of \eqref{nonlinDE} by $u(t;u_0)$.  We rewrite \eqref{nonlinDE} as a nonautonomous linear inhomogenous equation by linearizing $f$ in space about $u(t;u_0)$ as $C(t) = f'(u(t;u_0),t)$ ( $'$ denotes the derivative of $f$ with respect to the $u$ variables) to obtain
\beq\label{lininh}
\dot u = C(t)u + (f(u,t)-C(t)u \equiv  C(t)u + N(u,t)
\eeq
There exists (see e.g. \cite{DV1,DV2}) an orthogonal time-dependent change of variables $u(t)=Q(t)z(t)$, where the orthogonal matrix function $Q(t)$ satisfies the differential equation
\[
\dot Q = Q S(Q, C(t)),\,\,\, Q(0)=Q_0,\,\,\, 
S(Q,C(t))_{ij} = \left\{
\begin{array}{rl}
(Q^T C(t)Q)_{ij},& i>j\cr
0, & i=j\cr
-(Q^T C(t)Q)_{ji},& i<j\cr
\end{array}
\right.
\]
so that $z(t)$ is such that $z(t_0) = Q(t_0)^T u(t_0)$ and also satisfies the differential equation
\beq\label{nonlinDEz}
\dot z = D(t)z + Q^T(t)N(Q(t)z,t), \quad D(t) = Q^T(t)C(t)Q(t)-Q^T(t)\dot{Q}(t)
\eeq
where $D(t)$ is of the form 
\beq\label{Ddef}
D(t) = \begin{pmatrix}A(t) & E(t) \cr 0 & B(t)\cr\end{pmatrix}.
\eeq
where $A(t) \in \mathbb{R}^{p\times p}$ is upper triangular, $E(t) \in \mathbb{R}^{p times d-p}$, and $B(t) \in \mathbb{R}^{d-p \times d-p}$.  We rewrite the system \eqref{nonlinDEz} as
\begin{equation}\label{decoup}
	\begin{cases}
		\dot{x} = A(t) x + F(t, x, y)\\
		\dot{y} = B(t) y + G(t, x, y).
	\end{cases}
\end{equation}
  By rewriting \eqref{nonlinDE} in the linear inhomogeneous form \eqref{lininh} and changing to the $z(t)$ variables, we have transformed to the system \label{decoup} where, at the linear level, the $y(t)$ variables are decoupled from the $x(t)$ variables.  In the following sections, by assuming that the system \label{decouple} satisfies a spectral gap condition, we use inertial manifold techniques to extend this decoupling at the linear level to a full decoupling at the nonlinear level and then explore various numerical methods for computing this nonlinear transformation and its applications.

%
%
%

\section{Basic Algorithms}\label{basicalg}

Consider the nonautonomous dynamical system
\begin{equation}
	\label{equ:original NDS}
	\begin{cases}
		\dot{x} = A(t) x + F(t, x, y),\quad x(t_0) = x_0,\\
		\dot{y} = B(t) y + G(t, x, y), \quad y(t_0) = y_0.
	\end{cases}
\end{equation}
In this setting, assume $A(t)$ has the strong stable Lyapunov exponents while 
$B(t)$ has the unstable, neutral, and weakly stable Lyapunov exponents where for 
$w = \begin{pmatrix} w_1\cr w_2\cr\end{pmatrix}$
in (\ref{decoup1}) and $C_{11}(t)$ and $C_{22}(t)$ in (\ref{Ddef}) we have 
$A(t)=C_{22}(t)$, $B(t)=C_{11}(t)$, $F(t,x,y)=H_2(t,w_1,w_2)$, and $G(t,x,y)=H_1(t,w_1,w_2)$.
Here $x$ and $y$ are elements of some Banach spaces $X$ and $Y$, respectively, and $A:\mathbb{R} \rightarrow \mathcal{L}(X)$, $B:\mathbb{R} \rightarrow \mathcal{L}(Y)$, $F:\mathbb{R}\times X \times Y \rightarrow X$, and $G:\mathbb{R}\times X \times Y \rightarrow Y$ are mappings satisfying the following assumptions.  We follow the framework in \cite{AulbachWanner}. 

\begin{enumerate}[(H1)]
	\item \label{H1} The mappings $A$ and $B$ are locally integrable and there exists $K\geq 1$ and $\alpha < \beta$ such that the evolution operators $\Lambda_A$ and $\Lambda_B$ of the homogeneous linear equations $\dot{x} = A(t)x$ and $\dot{y} = B(t)y$, respectively, satisfy the estimates
	\begin{align*}
		&\| \Lambda_A(t,s) \| \leq Ke^{\alpha (t-s)} \quad \text{ for all } t\geq s,\\
		&\| \Lambda_B(t,s) \| \leq Ke^{\beta (t-s)} \quad \text{ for all } t\leq s.
	\end{align*}
	\item \label{H2} $F(t,0,0) = 0$ and $G(t,0,0) = 0$ for all $t\in \mathbb{R}$. $H(t,\cdot):=(F(t, \cdot),G(t,\cdot))$ is a Lipschitz function
	\begin{equation*}
		\| H(t, z_1) - H(t, z_2) \| \leq L \| z_1 - z_2 \|.
	\end{equation*}
	\item {\it Spectral Gap condition}:
	\begin{equation}
		\label{equ:classic gap}
		L < \frac{\beta - \alpha}{2K}.
	\end{equation}
\end{enumerate}
Under (H1) - (H3), the result in \cite{AulbachWanner} shows that there exists $\Phi: \mathbb{R}\times X \rightarrow Y$ and $\Psi: \mathbb{R}\times Y \rightarrow X$ such that
\begin{equation}
	\label{equ:decoupled NDS}
	\begin{cases}
		\dot{x} = A(t) x + F(t, x, \Phi(t,x))\\
		\dot{y} = B(t) y + G(t, \Psi(t,y), y)
	\end{cases}
	.
\end{equation}


Recall from \cite{AulbachWanner} that
\begin{equation}
\label{eq:T map}
\mathcal{T}(\varphi, x_0, t_0)(t) = \Lambda_A(t,t_0)x_0 + \int_{t_0}^t \Lambda_A(t,s)F(\varphi(s))\;ds
-\int_t^{\infty} \Lambda_B(t,s)\;G(\varphi(s))\;ds, \quad \forall t \geq t_0,
\end{equation}
and it can be shown that for given $x_0\in X$ and $t_0\in \mathbb{R}$, $\mathcal{T}$ has a fixed point in a proper Banach space, denoted by $\varphi$:
\begin{equation}
\label{eq:T map fixed point}
\varphi(t) = \mathcal{T}(\varphi(t)).
\end{equation}
Moreover, from \eqref{eq:T map fixed point} one can show that $\varphi(t) =: (x(t),\; y(t))$ is the unique solution of
\begin{equation}
\label{equ:T map ivp}
\begin{cases}
\dot{x} = A(t)x + F(t,x,y),\quad x(t_0)= x_0, \\
\dot{y} = B(t)y + G(t,x,y), \quad y(t_0) = -\int_{t_0}^{\infty} \Lambda_B(0,s)\;G(\varphi(s))\;ds.
\end{cases}
\end{equation}
On the other hand, note from \eqref{eq:T map fixed point} that, as $t\rightarrow \infty$, $y(t) \rightarrow 0$.  Therefore, we can view \eqref{equ:T map ivp} as the following BVP
\begin{equation}\label{SMBVP}
\begin{cases}
\dot{x} = A(t)x + F(t,x, y), \quad x(t_0) = x_0 \\
\dot{y}	= B(t)y + G(t,x, y), \quad y(\infty) = 0
\end{cases}
.
\end{equation}
Similarly, for the inertial manifold, one can consider the following BVP
\begin{equation}\label{IMBVP}
\begin{cases}
\dot{x} = A(t)x + F(t,x, y), \quad x(-\infty) = 0 \\
\dot{y}	= B(t)y + G(t,x, y), \quad y(t_0) = y_0
\end{cases}
.
\end{equation}

Our approach here can be thought of as an expansion in Lyapunov vectors. We
write the original variable $u(t) = Q(t) v(t)$ where $v(t) = \begin{pmatrix}y(t) \cr x(t) \end{pmatrix}$ and 
$Q(t)$ is the orthogonal time dependent change of variables. This will tend to organize the growth and decay in
the linearized equation so that $y(t)$ corresponds to the positive Lyapunov exponents and some of the larger
negative Lyapunov exponents while $x(t)$ corresponds to the more negative Lyapunov exponents. For $y(t)\in\mathbb{R}^p$,
$Q_p(t)$ denote the first $p$ columns of $Q(t)$ and $Q_{n-p}(t)$ denote the remaining columns of $Q(t)$. We can 
form time dependent projections $P_1(t) = Q_p(t)Q_p^T(t)$ and $P_2(t) = Q_{n-p}(t)Q_{n-p}^T(t)$. It is easy to see that
these are complementary projections. Writing $u(t) = P_1(t) w_1(t) + P_2(t) w_2(t)$ we have immediately that 
$y(t) = Q_p^T(t) w_1(t)$ and $x(t) = Q_{n-p}(t) w_2(t)$. 


To justify this as expansion in terms of Lyapunov vectors we recall the approach taken in
\cite{DEVV10,DEVV11}. Let $X(t)$ be a fundamental matrix solution of the linear variational
equations $w' = f'(u(t)) w$ and write $X(t) = Q(t) R(t)$, the smooth $QR$ decomposition
of $X(t)$. If we let $D(t) = {\rm diag}(R(t))$ and write 
$
R(t) = D(t) [D^{-1}(t) R(t)] \equiv D(t)Z(t),
$
then $Z(t)$ (unit upper triangular) satisfies an equation of the form
$\dot Z = E(t) Z$. 
Moreover, integral separation implies $E(t)\to 0$ as $t\to\infty$, so $Z(t)\to {\overline Z}$ and
as was shown in \cite{DEVV10} the rate of convergence is exponential based upon the integral separation.
This was also extended to the case of robust but non-distinct Lyapunov exponents.
To obtain Lyapunov vectors with respect to the  principal matrix solution ($X(0)=I$), we may assume that
$Q(0)=Q_0$ and $R(0)=I$. We want initial conditions $x_j(0)$ that grows/decays at exponential rate $\lambda_j$ as
$\to\infty$ and since in the case of robust Lyapunov exponents, the exponents are obtained from the diagonal
of $R(t)$ we want ${\overline Z} x_j(0) = e_j$ where $e_j$ is the $j$th unit vector,
so $x_j(0) = Q_0{\overline Z}^{-1}e_j$ for $j=1,2...$.

At an arbitrary time $t_0$, these Lyapunov vectors have the form
$
Q(t_0) {\overline Z}_{t_0}^{-1} e_j,\,\, j=1,2,...
$
where (if the Lyapunov exponents are robust) ${\overline Z}_{t_0} = \lim_{t\to\infty} Z_{t_0}(t)$,
$
\dot Z_{t_0}(t) = E_{t_0}(t) Z_{t_0}(t),\,\, Z_{t_0}(t_0) = I,
$
where $E_{t_0}(t)$ is strictly upper triangular and for $i<j$,
$
(E_{t_0})_{ij}(t) = B_{ij}(t) \exp(\int_{t_0}^t B_{jj}(s) - B_{ii}(s)ds).
$
Thus, the span of the first $p$ Lyapunov exponents at an arbitrary time $t_0$ 
is the same as the span of the first $p$ columns of $Q(t_0)$.


The approach we will take to determine $C(t)$ in the form (\ref{Cdef}) is based upon the use of
continuous Householder reflectors as developed in \cite{DV1,DV2}. This minimizes the number of equations
needed to obtain the form (\ref{Cdef}) and will result in $C_{11}(t)$ upper triangular, but $C_{22}(t)$ 
potentially full. An alternative is to solve for the first $p$ columns of $Q(t)$ (analogous to a reduced 
QR decomposition) which is enough to determine
$C_{11}(t)$ but requires a smooth orthogonal complement in order to obtain $C_{12}(t)$ and $C_{22}(t)$.
We note that the Householder approach in the time dependent case also requires that the sign of the diagonal
elements of $R(t)$ be allowed to vary for numerical stability purposes, but it is easy to recover the smooth
$Q(t)$ corresponding to positive diagonal elements in $R(t)$ by multiplying $Q(t)$ of the right and $R(t)$
on the left by an appropriate diagonal matrix of $\pm 1$s (see \cite{DV2}).

For a time dependent linear problem with $C(t)$ given in (\ref{Cdef}), the boundary value problem (\ref{IMBVP})
has the form $A(t):=C_{22}(t)$, $F\equiv 0$, $B(t):=C_{11}(t)$, and $G(x,y,t):= C_{12}(t)x$.  
Under the spectral gap condition (\ref{equ:classic gap}),
$x(t)\equiv 0$ and $(0,y_0)$ in the graph of the inertial
manifold provided the solution to $\dot y = B(t) y,\,\,\, y(0)=y_0$ is bounded as $t\to\infty$, in particular,
if $y_0$ is a linear combination of the Lyapunov vectors of $\dot y = B(t) y$ corresponding to positive Lyapunov
exponents.
Since $y(t)$ corresponds to positive Lyapunov exponents and $x(t)$ corresponds to negative exponents and the graph of the inertial manifold is defined to be the collection of initial conditions such that their trajectories are bounded for all backward in time, $x(t)\equiv 0$ and $(0,y_0)$ in the graph of the inertial
manifold.

\section{Implementation}\label{Imp}

Let $X(t)$ be a fundamental matrix solution of $\dot{u}(t) = C(t)u(t)$ and let $X(t) = Q(t)R(t)$ be a $QR$ factorization of $X(t)$ where $Q(t)$ is orthogonal and expressed product of Householder matrices $Q^T(t) = Q_p(t)\cdot \hdots \cdot Q_1(t)$ and $R(t)$ is of the form 
$$  \begin{pmatrix}R_{1,1}(t) & R_{1,2}(t) \cr 0 & R_{2,2}(t)\cr\end{pmatrix}$$
where $R_{1,1}(t) \in\mathbb{R}^{p\times p}$ is upper triangular, $R_{1,2}(t) \in \mathbb{R}^{p\times (d-p)}$, and $R_{2,2}(t) \in \mathbb{R}^{(d-p)\times (d-p)}$.  Since $Q_i(t)$ is a Householder matrix we can write $Q_i = \left[\begin{array}{cc} I_{i-1} & 0 \\0 & P_i(t) \end{array} \right]$ where $I_{i-1}$ is the $i-1$ dimensional identity matrix and $P_i(t) = I -2 v_i(t) v_i(t)^T$ where $v_i(t) \in \mathbb{R}^{d-i+1}$ with $\|v_i\|_2 =1$.  To figure out what $v_i$ needs to be, let $X_i$ be the transformed matrix $Q_{i-1}\cdot \hdots \cdot Q_1 X$ and then notice if $u_i = X_i e_i -\sigma_i \|X_i e_i\|_2 e_i$, then $v_i = v_i/\|v_i\|_2$ defines a Householder transformation $Q_i$ that diagonalizes the $i^{th}$ column of $X(t)$.  The value $\sigma_i$ is chosen to ensure numerical stability and the canonical choice is (see e.g. \cite{GVL96})
\begin{equation}\label{numstab}
\sigma_j = \left\{\begin{array}{cc} -1 & \text{ if } e_1^T x_i(t) \geq 0 \\
 1 & \text{ if } e_1^T x_i(t) < 0 \end{array} \right.
\end{equation}
where $x_i(t)$ is the $i^{th}$ column of $X_i(t)$.  To further reduce the number of equations we need to solve we can define $w_i = v_i/(e_1^T v_i)$ and then notice that we can recover $v_i$ from $w_i$ via the additional relationship $e_1^T v_i = -\sigma_i / \|w_i\|_2$.  We want to avoid computing the fundamental matrix solution $X(t)$ or any of its columns and want to work only with the coefficient matrix $C(t)=f'(u(t))$ and the $w_i$'s.  To obtain $D(t)$ from $C(t)=f'(u(t))$, let $D_i(t)$ be the matrix obtained after diagonalizing the $i^{th}$ column of $X$.  We find can inductively find $C_i$ by doing a sequence of $(C,Q_i)$ updates:
\begin{equation}\label{LQudpate}
C_i(t) = Q_i(t)C_{i-1}(t)-Q_i(t)\dot{Q}_i(t), \quad i=1,\hdots,i-1
\end{equation}
where $C_0$ is taken to be $f'(u(t))$.  We can express the $(C,Q_i)$ update in terms of only the $C_i$ and $w_i$ as
$$Q_i(t)C_{i-1}(t)-Q_i\dot{Q}_i = C_{i-1}-\frac{2}{w_i^T w_i}\left( w_i (w_i^T C_{i-1})+(C_{i-1} w_i) w_{i}^T \right) = 4\frac{w_i^T C_{i-1} w_{i}}{(w_i^T w_i)^2}w_i w_i^T - \frac{2}{w_i^T w_i}(w_i \dot{w}_i^T-\dot{w}_i w_i^T).$$
From the definition of $w_i$ we have $w_i = [1\,\,\, \hat{w}_i]^T$ and we can derive the following differential equation satisfied by $\hat{w}_i$:
\begin{equation}\label{weqn}
\dfrac{d \hat{w}_i}{dt} = \left(C_{i-1}^{1,1}]\hat{w}_i+\hat{w}_i^T \hat{C}^{1}_{i-1} - 2\frac{w_i^T C_{i-1} w_i}{w_i^T w_i}\right)\hat{w}_i +\left(1-\frac{w_i^T w_i}{2} \right) \hat{C}_{i-1}^{1} + \hat{C}_{i-1} \hat{w}_{i-1} \equiv h_i(\hat{w},t)
\end{equation}
where $C_{i-1}^{j,k}$ is the $(j,k)$ entry of $C_{i-1}$, $\hat{C}_{i-1}$ is the submatrix $C_{i-1}^{2:n,2:d}$ of $C_{i-1}$ and $\hat{C}^{1}_{i-1}$ is the column vector $C_{i-1}^{2:n,1}$.
The condition \eqref{numstab} is expressed by having 
\begin{equation}\label{wreembedcond}
1-\hat{w}_i^T \hat{w}_i \geq 0
\end{equation}
be satisfied at all times.  If \eqref{wreembedcond} is not satisfied, then we redefine $\sigma_i$ accordingly and then reembed the new update $\hat{w}_i$ variables to be consistent with the new $\sigma_i$.\\

We want to compute $Q(t)$ simultaneously with our computations of the solution $x(t)$ and $y(t)$.  To do so we must also include the differential equations for the $\hat{w}_i(t)$ variables in the boundary value problem.  We have the option of choosing a boundary condition at $-\infty$ or at $0$.  We choose the former since the transformation $Q(t)$ decouples the strong stable modes from the weakly stable and unstable modes in forward time and does the opposite in backward time.  Therefore we express the full boundary problem formulation to compute the inertial manifold at time $\tau$ as
\begin{equation}\label{bvpQ}
\left\{\begin{array}{c}
\dot{x} = A(t)x + F(t,x,y)\\
\dot{y} = B(t)y+G(t,x,y)\\
\hat{w}_i = h_i(\hat{w}_{i-1},t), \quad i=1,\hdots,p\\
x(-\infty) = 0, y(\tau) = y_0, \hat{w}_i(-\infty) = \hat{w}_{i,-\infty}
\end{array}
\right.
\end{equation}
In addition to \eqref{bvpQ} we have the initial value problem
\begin{equation}\label{Qivp}
\left\{\begin{array}{c}
\dot{x} = A(t)x + F(t,x,y)\\
\dot{y} = B(t)y+G(t,x,y)\\
\hat{w}_i = h_i(\hat{w}_{i-1},t), \quad i=1,\hdots,p\\
x(t_0) = x_0, y(t_0) = y_0, \hat{w}_i(t_0)=\hat{w}_{i,0}
\end{array}
\right.
\end{equation}
that is well defined for any $t_0 \in \mathbb{R}$.  The boundary conditions $\hat{w}_i(-\infty) = \hat{w}_{i,-\infty}$, $i=1,\hdots,p$ can be chosen at random since in forward time the columns of $Q(t)$ align at an exponential rate so that the system decouples into a strongly stable component and an unstable or weakly stable component.  However, different boundary conditions correspond to different solutions $x(t)$ and $y(t)$ since we have the relation $(x(t)^T,y(t)^T)^T = Q(t)^T u(t)$.  This is a drawback of our formulation since it requires us to make an arbitrary choice at the outset of our computation without a way of determining what the potential consequences are for our choice of $\hat{w}_i(-\infty) = \hat{w}_{i,-\infty}$.\\

Since the boundary value problem \eqref{bvpQ} boundary conditions at $-\infty$ it cannot be implemented in exact form.  We must choose a value $T > 0$ to use as our approximate $-\infty$ which leads to the following boundary value problem.
\begin{equation}\label{bvpQT}
\left\{\begin{array}{c}
\dot{x} = A(t)x + F(t,x,y)\\
\dot{y} = B(t)y+G(t,x,y)\\
\hat{w}_i = h_i(\hat{w}_{i-1},t), \quad i=1,\hdots,p\\
x(-T) = 0, y(t_0) = y_0, \hat{w}_i(-T) = \hat{w}_{i,-T}
\end{array}
\right.
\end{equation}
The value of $T$ will vary depending on the problem and the initial conditions.  It may not be obvious what value to use and for larger values the approximation may in fact get worse.  We will return to this issue in Section 5 where we discuss the convergence dependence of the approximation on $T$.   In section 6, we demonstrate that, contrary to what one would expect, the best value for $T$ may be quite small.  

To solve for the $\hat{w}_i$ equations using Matlab's IVP and BVP solvers we must modify the solvers so that we can reembed the $\hat{w}_i$ variables when the $\sigma_i$'s are changed according to \eqref{wreembedcond}.  To do so we modify the ode45 code so that we perform a reembedding on the value $\hat{w}_i(t_n)$ for $i=1,\hdots,p$ just before ode45 computes the candidate value of $\hat{w}_i(t_{n+1})$ for $i=1,\hdots,p$.  Similarly we modify the bvp4c code so that the reembedding happens after the Newton iteration converges to the candidate solution $\hat{w}_i(t_{n+1})$ for $i=1,\hdots,p$.\\

 We use the following algorithm to approximate the solution to \eqref{bvpQ} at time $t$.

  \begin{algorithm}\label{IMalg}
        \label{alg:general}
        \begin{algorithmic}[1]
                \REQUIRE $T,y_0$
                \ENSURE $w_i(t-T),y(t-T)$
                \STATE{(Construct Initial guess for BVP) Solve \eqref{Qivp} on $[t-T,t]$ using modified ode45 with the initial conditions $y(t-T)$, $x(t-T)=0$, and $w_{i}(t-T)$ for $i=1,\hdots,p$. }
                \STATE{(Solve BVP) Use the computed initial guess solution to solve \eqref{bvpQT} on $[t-T,t]$ with modified bvp4c }
                \end{algorithmic}
\end{algorithm}

We remark here that the choice of values for $\hat{w}_{i,-T}$ can have a large impact on the computation.  For each choice of  $p$, $-T$, and $\{\hat{w}_{i,-T }\}_{i=1}^{p}$ there is a unique $Q(t)$.  Each choice of $-T$ and $\{\hat{w}_{i,-T }\}_{i=1}^{p}$ determines a different decoupling of \eqref{nonlinDE} into unstable and weakly stable variables $x(t)$ and strongly stable variables $y(t)$.  This gives us a degree of flexibility in our computations; if our method fails to accurately approximate a point on the inertial manifold, we can increase the size of $p$, change the value of $T$ or select a different boundary condition $\{\hat{w}_{i,-T }\}_{i=1}^{p}$.

%
%
%

 equivalent to solving the initial value problem $\dot{y}=B(t)y+G(t,\psi(t,y),y)$, $y(t_0)=y_0$.  To solve this problem we must be able to evaluate $\psi(t,y(t))$ which requires the solution of a boundary value problem of the form \eqref{bvpQT}.  For example, using the explicit Euler scheme with fixed step-size to solve $\dot{y}=B(t)y+G(t,\psi(t,y),y)$, $y(t_0)=y_0$ would proceed as follows.  Fix a step-size $\Delta t> 0$ and let $t_n = n\Delta t$.  For $n \geq 0$ Let $y_n$ be the numerical solution at step $n$.  First we must compute the solution of the boundary value problem \eqref{bvpQT} with boundary conditions $x(t_n-T)=0$, $y(t_n)=y_n$, and $\hat{w}_i(t_n)=\hat{w}_{i,n}$ to obtain $x_n :=\psi(t_n,y_n)$.  Then we set $y_{n+1} = y_n +\Delta t \left(B(t_n)y_n+G(t_n,x_n,y_n) \right)$ and iterate the process.\\

  To use more sophisticated IVP solvers to solve $\dot{y}=B(t)y+G(t,\psi(t,y),y)$, $y(t_0)=y_0$ may require more calls to the boundary value problem solver \ref{IMalg}.  For instance to use an explicit $s$-stage Runge-Kutta method we will need to call the boundary value problem solver $s$ times to form the stage values.  It is interesting to note that if we solve the initial value problem $\dot{y}=B(t)y+G(t,\psi(t,y),y)$, $y(0)=y_0$ with a $k$-step linear multistep method of the form
  $$\sum_{i=0}^{k} \alpha_i y_{n+i} = \sum_{i=0}^{k} \beta_i (B(t_{n+i})y_{n+i}+G(t_{n+i},\psi(t_{n+i},y_{n+i}),y_{n+i}))$$
  then we only need to call the boundary value problem solver once at each time-step to form $(B(t_{n+k})y_{n+k}+G(t_{n+k},\psi(t_{n+k},y_{n+k}),y_{n+k}))$ since the other values will be stored in memory.  Using implicit methods in this context is inefficient, especially for nonlinear problems, since these may require many calls to \ref{IMalg} during an iterative process that solves a system of algebraic equations to compute a single step of the approximation.  \\
  
  With this in mind let  $y_{n+1} = \varphi(\Delta t,t_n,\{y_n\}_{k=0}^{n})$ be an $s$-stage, $k$-step explicit numerical method that approximates the initial value problem  $\dot{y}=B(t)y+G(t,\psi(t,y),y)$, $y(t_0)=y_0$.  We use the following algorithm to compute an approximate trajectory on the inertial manifold.

  \begin{algorithm}\label{IMalgts}
  
        \label{alg:generalts}
        \begin{algorithmic}[1]
                \REQUIRE $T,\Delta t$, $N$
                \ENSURE $\{y_n\}_{n=0}^{N}$
                 \FOR{$n=0:N-1$} 
                 \STATE{Set $t_n = t_0+n\Delta t$}
                 \STATE{(Construct Initial guess for BVP) Solve \eqref{Qivp} on $[-T,t_n]$ using modified ode45 with the initial conditions $y(t_n-T)$, $x(t_n-T)=0$, and $w_{i}(t_n-T)\}$ for $i=1,\hdots,p$.}
                \STATE{(Solve BVP) Solve \eqref{bvpQT} on $[t_n-T,t_n]$ with the boundary conditions $x(t_n-T)=0$, $y(t_n-T)=y_n$, and $\hat{w}_i(t_n-T)=\hat{w}_{i,-T}$ using modified bvp4c}
                \STATE{Set $y_{n+1} = \varphi(\Delta t,t_n,\{y_n\}_{k=0}^{n})$ (Repeating the above step to approximate stage values needed to evaluate the method)}
                \ENDFOR
                
        \end{algorithmic}
\end{algorithm}.\\

In Section 6 we present the results of some numerical experiments we conduct with Algorithms \ref{IMalg} and \ref{IMalgts} on several challenging problems.


\section{Convergence Dependence on $T$}\label{dependenceonT}

Throughout this article, we deal with the asymptotic boundary value problem (see \cite{Baxley1990122} and \cite{LentiniKeller80}).  The approach we take is to consider the boundary condition at the finite time, $T$.  More precisely, consider
\begin{equation}
\label{equ:stable bvp}
\begin{cases}
\dot{x} = A(t)x + F(t,x, y), \quad x(t_0) = x_0 \\
\dot{y}	= B(t)y + G(t,x, y), \quad y(\infty) = 0
\end{cases}
,
\end{equation}
and we approximate \eqref{equ:stable bvp} by
\begin{equation}
\label{equ:stable bvp T}
\begin{cases}
\dot{x} = A(t)x + F(t,x, y), \quad x(t_0) = x_0 \\
\dot{y}	= B(t)y + G(t,x, y), \quad y(T) = 0
\end{cases}
.
\end{equation}
In this section, we investigate the convergence dependence on $T$.
A natural question arises---does the solution of \eqref{equ:stable bvp T} converges to the solution of \eqref{equ:stable bvp} as $T$ goes to infinity?  This problem is also known as the asymptotic boundary value problem (see \cite{Baxley1990122} and \cite{LentiniKeller80}).  We recall the mapping $\mathcal{T}:\mathcal{F}_{\sigma}\times X \times \mathbb{R} \rightarrow \mathcal{F}_{\sigma}$ defined by \begin{equation}
\mathcal{T}(\varphi, x, t_0)(t) = \Lambda_A(t,t_0) x + \int_{t_0}^t \Lambda_A(t,s)\;F(\varphi(s))\;ds - \int_t^{\infty}\Lambda_B(t,s) \;G(\varphi(s))\;ds,\quad t\geq t_0,
\end{equation} where \begin{equation}\label{def Fsigma} \mathcal{F}_{\sigma,t_0}=\{\varphi \in C([t_0,\infty),Z); \; \|\varphi\|_{\sigma}=\sup_{t\geq t_0}e^{-\sigma (t - t_0)}\|\varphi(t)\|<\infty \}, \;\sigma\in (\alpha+KL,\beta-KL). \end{equation}
Let $\varphi$ be the fixed point of $\mathcal{T}$, and thus, it satisfies \eqref{equ:stable bvp}.  Now, we define the analogous $\mathcal{T}$ and $\mathcal{F}_{\sigma, t_0}$ for \eqref{equ:stable bvp T}.  Let   
\begin{equation}
\label{def:Fsigma T} \mathcal{F}_{\sigma, t_0,T}=\{\varphi \in C([t_0,T],Z); \; \|\varphi\|_{\sigma, T}:=\max_{t_0\leq t \leq T}e^{-\sigma (t - t_0)}\|\varphi(t)\|<\infty \}, \;\sigma\in (\alpha+KL,\beta-KL), 
\end{equation}
and $\mathcal{T}_T:\mathcal{F}_{\sigma,t_0,T}\times X \times \mathbb{R} \rightarrow \mathcal{F}_{\sigma,T}$ be defined by
\begin{equation}
\label{equ:mathcalT t}
\mathcal{T}_T(\varphi_T, x, t_0)(t) = \Lambda_A(t,t_0)x + \int_{t_0}^t \Lambda_A(t,s)\;F(\varphi_T(s))\;ds - \int_t^{T} \Lambda_B(t,s)\;G(\varphi(s)_T)\;ds,\quad t_0\leq t \leq T.
\end{equation}
We summarize the basic properties and facts of $\mathcal{T}_T$.  Since the proof is similar to the one in \cite{AulbachWanner} and \cite{foliation}, we omit it.
\begin{prop}
	Let $\mathcal{T}_T$ be defined as in \eqref{equ:mathcalT t}, and $x\in X$ and $t_0 \in \mathbb{R}$ be given. 
	\begin{enumerate}
		\item $\mathcal{T}_T(x, t_0, \cdot)$ is a contraction mapping in $\mathcal{F}_{\sigma,T}$.
		\item $\mathcal{T}_T(x, t_0, \cdot)$ has a unique fixed point, denoted by $\varphi_T$.
		\item $\varphi_T$ satisfies \eqref{equ:stable bvp T}.
	\end{enumerate}
	
\end{prop}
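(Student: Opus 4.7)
The plan is to apply Banach's fixed point theorem in the weighted space $\mathcal{F}_{\sigma,t_0,T}$, mirroring the Aulbach--Wanner argument in \cite{AulbachWanner} but with the improper integral $\int_t^\infty$ replaced by the proper integral $\int_t^T$. The key observation is that the finite truncation only shortens the $\Lambda_B$ integral, so every estimate carries over essentially verbatim; moreover, since $[t_0,T]$ is compact, the weight $e^{-\sigma(t-t_0)}$ is bounded above and below, so $(\mathcal{F}_{\sigma,t_0,T},\|\cdot\|_{\sigma,T})$ is a Banach space (equivalent as a normed space to $C([t_0,T],Z)$ with the sup norm), which takes care of completeness.

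For part (1), I would take $\varphi_1,\varphi_2\in\mathcal{F}_{\sigma,t_0,T}$, apply the Lipschitz bound (H\ref{H2}) to $F$ and $G$, and insert the exponential estimates from (H\ref{H1}) to obtain
\[
\|(\mathcal{T}_T\varphi_1)(t)-(\mathcal{T}_T\varphi_2)(t)\| \le KL\int_{t_0}^t e^{\alpha(t-s)}\|\varphi_1(s)-\varphi_2(s)\|\,ds + KL\int_t^T e^{\beta(t-s)}\|\varphi_1(s)-\varphi_2(s)\|\,ds.
\]
Bounding $\|\varphi_1(s)-\varphi_2(s)\| \le e^{\sigma(s-t_0)}\|\varphi_1-\varphi_2\|_{\sigma,T}$ and multiplying through by $e^{-\sigma(t-t_0)}$ converts the exponentials to $e^{(\alpha-\sigma)(t-s)}$ and $e^{(\beta-\sigma)(t-s)}$. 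The range $\sigma\in(\alpha+KL,\beta-KL)$ forces $\alpha-\sigma<-KL<0$ and $\beta-\sigma>KL>0$, so the two integrals are bounded by $1/(\sigma-\alpha)$ and $1/(\beta-\sigma)$ (sharper in fact, since the intervals are finite), producing
\[
\|\mathcal{T}_T\varphi_1-\mathcal{T}_T\varphi_2\|_{\sigma,T} \le KL\left(\tfrac{1}{\sigma-\alpha}+\tfrac{1}{\beta-\sigma}\right)\|\varphi_1-\varphi_2\|_{\sigma,T},
\]
and the spectral gap condition (H3) guarantees that the prefactor is strictly less than $1$ (for instance with $\sigma=(\alpha+\beta)/2$ it becomes $4KL/(\beta-\alpha)<2$, and a slightly more careful optimization of $\sigma$ brings it below $1$). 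The same estimate applied to $\varphi$ versus $0$, together with $F(t,0)=G(t,0)=0$, shows that $\mathcal{T}_T$ sends $\mathcal{F}_{\sigma,t_0,T}$ into itself. Part (2) is then immediate from Banach's fixed point theorem.

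For part (3), I would differentiate the fixed point identity $\varphi_T=\mathcal{T}_T\varphi_T$ componentwise. Writing $\varphi_T(t)=(x(t),y(t))$, the $x$-coordinate $x(t)=\Lambda_A(t,t_0)x_0+\int_{t_0}^t \Lambda_A(t,s)F(\varphi_T(s))\,ds$ is the standard variation-of-constants formula, which by Leibniz satisfies $\dot x = A(t)x+F(t,x,y)$ with $x(t_0)=x_0$. The $y$-coordinate is $y(t)=-\int_t^T \Lambda_B(t,s)G(\varphi_T(s))\,ds$; differentiating and using $\Lambda_B(t,t)=I$ gives $\dot y = B(t)y+G(t,x,y)$, and setting $t=T$ gives $y(T)=0$. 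The main obstacle throughout is the sharp contraction estimate in part (1), in particular verifying that the range for $\sigma$ together with (H3) really does drive the contraction constant below $1$; the remaining steps are mechanical, and the truncation actually removes the mild integrability concern at $t=\infty$ present in the original Aulbach--Wanner argument.
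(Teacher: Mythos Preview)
Your overall strategy is exactly what the paper has in mind: it omits the proof and simply refers to \cite{AulbachWanner}, and your outline is precisely the Lyapunov--Perron fixed point argument transported to $[t_0,T]$. Parts (2) and (3) of your outline are fine once (1) is in hand.

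However, there is a genuine gap in your contraction estimate. You add the two integral contributions and arrive at the constant
\[
KL\left(\frac{1}{\sigma-\alpha}+\frac{1}{\beta-\sigma}\right),
\]
and then assert that ``a slightly more careful optimization of $\sigma$ brings it below $1$.'' That is false: the function $\sigma\mapsto \frac{1}{\sigma-\alpha}+\frac{1}{\beta-\sigma}$ is convex on $(\alpha,\beta)$ with its minimum at $\sigma=(\alpha+\beta)/2$, where it equals $4/(\beta-\alpha)$. Thus your constant is always at least $4KL/(\beta-\alpha)$, and under (H3) this is only guaranteed to be less than $2$, not $1$. No choice of $\sigma$ will rescue the sum.

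The fix, which is how the argument in \cite{AulbachWanner} actually runs and how the paper uses it in the proof of Theorem~\ref{thm:conv depend T}, is to observe that the $X$-component of $\mathcal{T}_T\varphi_1-\mathcal{T}_T\varphi_2$ involves \emph{only} the $\Lambda_A$ integral and the $Y$-component involves \emph{only} the $\Lambda_B$ integral. Equipping $Z=X\times Y$ with the max norm $\|(x,y)\|=\max(\|x\|,\|y\|)$ then gives, after the same weighting by $e^{-\sigma(t-t_0)}$,
\[
\|\mathcal{T}_T\varphi_1-\mathcal{T}_T\varphi_2\|_{\sigma,T}\le \max\left\{\frac{KL}{\sigma-\alpha},\,\frac{KL}{\beta-\sigma}\right\}\|\varphi_1-\varphi_2\|_{\sigma,T}=\kappa\,\|\varphi_1-\varphi_2\|_{\sigma,T},
\]
and for any $\sigma\in(\alpha+KL,\beta-KL)$ one has $\kappa<1$. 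This is the constant $\kappa$ appearing throughout Section~\ref{dependenceonT}. Once you separate the components rather than summing them, the rest of your outline goes through without change.
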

At this point, we have stated the basic results in regards to the solution of \eqref{equ:stable bvp T}.  We are ready to give the convergence dependence $T$ result. 
\begin{thm}
	\label{thm:conv depend T}
	Let $x\in X$ and $t_0 \in \mathbb{R}$ be given, and $\varphi$ and $\varphi_T$ be the fixed point of $\mathcal{T}(x, t_0, \cdot)$ and $\mathcal{T}_T(x, t_0, \cdot)$, respectively.  Then
	\begin{equation}
		\| \varphi - \varphi_T \|_{\sigma, T} \leq \frac{e^{\kappa} \kappa}{1 - \kappa} \|x_0\| e^{(\alpha - \sigma)(T-t_0)},
	\end{equation}
	where $\kappa := \max\{ \frac{KL}{\sigma - \alpha}, \; \frac{KL}{\beta - \sigma} \}$.
\end{thm}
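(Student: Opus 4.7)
The plan is to compare the two fixed-point identities $\varphi=\mathcal{T}(\varphi,x_0,t_0)$ and $\varphi_T=\mathcal{T}_T(\varphi_T,x_0,t_0)$ in the weighted norm $\|\cdot\|_{\sigma,T}$, isolating a single tail term that encodes the dependence on $T$. For $t\in[t_0,T]$ the only structural difference between $\mathcal{T}$ and $\mathcal{T}_T$ is that the $\Lambda_B$-integral in $\mathcal{T}$ runs all the way to $\infty$; splitting this integral at $s=T$ gives, for all such $t$,
\[
\varphi(t)\;=\;\mathcal{T}_T\bigl(\varphi|_{[t_0,T]},\,x_0,\,t_0\bigr)(t)\;-\;\mathcal{R}(t),\qquad \mathcal{R}(t):=\int_T^{\infty}\Lambda_B(t,s)\,G(\varphi(s))\,ds.
\]
Subtracting $\varphi_T=\mathcal{T}_T(\varphi_T,x_0,t_0)$ and invoking the $\kappa$-contractivity of $\mathcal{T}_T$ (already established in the preceding proposition by the usual (H1)-(H2) estimate) collapses everything to
\[
\|\varphi-\varphi_T\|_{\sigma,T}\;\le\;\frac{\|\mathcal{R}\|_{\sigma,T}}{1-\kappa},
\]
so the theorem reduces to estimating this weighted residual.

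For the residual, I would exploit the cocycle identity $\Lambda_B(t,s)=\Lambda_B(t,T)\Lambda_B(T,s)$ to factor $\Lambda_B(t,T)$ out of $\mathcal{R}$, leaving $-y(T)$ inside by the very definition of the $y$-component of the infinite-interval fixed point at time $T$; this yields the compact expression $\mathcal{R}(t)=-\Lambda_B(t,T)\,y(T)$. The exponential bound in (H1) then gives $\|\mathcal{R}(t)\|\le K\,e^{\beta(t-T)}\|y(T)\|$, and since the weight $e^{-\sigma(t-t_0)+\beta(t-T)}$ is strictly increasing in $t$ (because $\beta>\sigma$ in the admissible range for $\sigma$), its maximum over $[t_0,T]$ is attained at $t=T$, leading to
\[
\|\mathcal{R}\|_{\sigma,T}\;\le\;K\,\|y(T)\|\,e^{-\sigma(T-t_0)}.
\]
The theorem is now equivalent to proving $\|y(T)\|\lesssim\|x_0\|\,e^{\alpha(T-t_0)}$ with an explicit constant of order $e^{\kappa}\kappa/K$.

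To close the argument I would combine the stable-manifold representation $y(T)=\Phi(T,x(T))$ from \cite{AulbachWanner} (Lipschitz in $x$ with constant of order $\kappa/(1-\kappa)$, and vanishing at $x=0$ by (H2)) with a Gronwall-type bound applied to the integral equation for $x$ on $[t_0,T]$. The Gronwall step uses the hyperbolicity rate $\alpha$ of $\Lambda_A$ together with the Lipschitz constants of $F$ and of $\Phi$; summing the successive nonlinear contributions over $[t_0,T]$ produces a bound of the form $\|x(T)\|\le C\,\|x_0\|\,e^{\alpha(T-t_0)}$, in which the spectral gap (H3) guarantees, through $\kappa<1$, that the cumulative amplification collapses into the single prefactor $e^{\kappa}$. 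Reassembling the three pieces then produces exactly the claimed bound. The hard part will be this final Gronwall closure: one must verify that the nonlinear coupling does not inflate the forward growth rate of $\|x(T)\|$ past $\alpha$, and that all the accumulated Lipschitz factors package into the prefactor $e^{\kappa}$; without the spectral gap (H3) these factors would fail to be summable and the decay rate $(\alpha-\sigma)$ could not be obtained.
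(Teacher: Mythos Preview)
Your overall architecture is the paper's: split the $\Lambda_B$-integral at $s=T$, use the $\kappa$-contractivity of $\mathcal{T}_T$ to reduce $\|\varphi-\varphi_T\|_{\sigma,T}$ to $(1-\kappa)^{-1}$ times the weighted norm of the tail $\mathcal{R}(t)=\int_T^{\infty}\Lambda_B(t,s)G(\varphi(s))\,ds$, and then show that this tail is $O(e^{(\alpha-\sigma)(T-t_0)})$. Up to that point the two arguments coincide almost line for line; your identification $\mathcal{R}(t)=-\Lambda_B(t,T)\,y(T)$ via the cocycle property is correct and is a pleasant way to see why the tail is small.

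The divergence is in how the tail is finally controlled. The paper does \emph{not} pass through the graph $\Phi$ or isolate the $x$-equation. Instead it proves a separate lemma bounding the full trajectory: $\sup_{s\ge T}e^{-\sigma(s-t_0)}\|\varphi(s)\|\le e^{\kappa}\|x_0\|\,e^{(\alpha-\sigma)(T-t_0)}$, obtained by a Gronwall-type argument applied to the complete fixed-point equation for $\varphi$, with both the forward $\Lambda_A$-integral and the backward $\Lambda_B$-integral present. One then inserts $\|G(\varphi(s))\|\le L\|\varphi(s)\|$ into the tail integral and integrates directly; the factor $\kappa$ comes from $KL\int_T^{\infty}e^{(t-s)(\beta-\sigma)}\,ds\le\kappa$, and the factor $e^{\kappa}$ comes from the lemma. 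This yields exactly the stated constant $e^{\kappa}\kappa/(1-\kappa)$.

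Your proposed closure---bound $y(T)=\Phi(T,x(T))$ by $\mathrm{Lip}(\Phi)\,\|x(T)\|$ and then run a Gronwall on the scalar equation for $\|x\|$---runs into the very difficulty you flag at the end. The reduced equation $\dot x=A(t)x+F(t,x,\Phi(t,x))$ carries a nonlinearity with Lipschitz constant $L(1+\mathrm{Lip}(\Phi))$, so the naive Gronwall gives $\|x(T)\|\le K\|x_0\|\,e^{(\alpha+KL(1+\mathrm{Lip}(\Phi)))(T-t_0)}$, i.e.\ a rate strictly worse than $\alpha$; the exponent $(\alpha-\sigma)$ in the statement would not survive, and the constants would not package into $e^{\kappa}\kappa$. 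The mechanism that keeps the rate at exactly $\alpha$ is the simultaneous presence of the backward $\Lambda_B$-integral in the fixed-point equation for $\varphi$, which is why the paper applies the Gronwall step to $\varphi$ as a whole rather than to $x$ alone. If you replace your final paragraph by that lemma on $\|\varphi\|$ (or, equivalently, bound $\|y(T)\|\le\|\varphi(T)\|$ and invoke the same lemma), your argument closes with the paper's constant.
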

\begin{proof}
	For $t \in [0, T]$, take the difference between $\varphi$ and $\varphi_T$:
	\begin{align*}
		\varphi(t) - \varphi_T(t) = \int_{t_0}^t \Lambda_A(t,s)\;[ F(\varphi(s)) - F(\varphi_T(s)) ]\;ds &- \int_t^{T} \Lambda_B(t,s)\;[ G(\varphi(s)) - G(\varphi_T(s)) ]\;ds \\
		&-\int_T^{\infty} \Lambda_B(t,s)\; G(\varphi(s))\;ds.
	\end{align*}
	Take the norm and multiply $e^{-\sigma (t-t_0)}$ on the both sides to obtain
	\begin{align*}
		e^{-\sigma (t - t_0)} \| \varphi(t) - \varphi_T(t) \| \leq & KL \max\{ \int_{t_0}^t e^{(t-s)(\alpha-\sigma)} e^{-\sigma (s - t_0)} \| \varphi(s) - \varphi_T(s) \|  \;ds,\;  \int_t^{T} e^{(t-s)(\beta - \sigma)} e^{-\sigma (s-t_0)} \| \varphi(s) - \varphi_T(s) \| \;ds\} \\
		& + KL \int_T^{\infty} e^{(t-s)(\beta-\sigma)}\;e^{-\sigma (s-t_0)} \|\varphi(s)\|\;ds.
	\end{align*}
	By Lemma \ref{lem:tail of varphi}, we have
	\begin{equation}
		\label{equ:tail proof1}
		\sup_{t\geq T} e^{-\sigma (t-t_0)} \| \varphi(t) \| \leq e^{(\alpha - \sigma)(T-t_0)} \|x_0\| e^{\kappa}.
	\end{equation}
	Also, by the direct calculation,
	\begin{equation}
		\label{equ:tail proof2}
		KL\int_{T}^{\infty}e^{(t-s)(\beta - \sigma)}\;ds \leq \kappa.
	\end{equation}
	Therefore, combine \eqref{equ:tail proof1} and \eqref{equ:tail proof2} and one has
	\begin{align}
		KL \int_T^{\infty} e^{(t-s)(\beta-\sigma)}\;e^{-\sigma (s - t_0)} \|\varphi(s)\|\;ds &\leq  e^{(\alpha - \sigma)(T - t_0)} \|x_0\| e^{\kappa} KL \int_{T}^{\infty}e^{(t-s)(\beta - \sigma)}\;ds \\
		&\leq e^{(\alpha - \sigma)(T - t_0)} \|x_0\| e^{\kappa} \kappa.
	\end{align}
	Hence, 
	\begin{equation}
		\label{equ:varphi - varphiT}
		e^{-\sigma (t-t_0)} \| \varphi(t) - \varphi_T(t) \| \leq \kappa \| \varphi - \varphi_T \|_{\sigma, T} +  e^{\kappa} \kappa \|x_0\| e^{(\alpha - \sigma)(T-t_0)}.
	\end{equation}
	Since the right side of \eqref{equ:varphi - varphiT} is independent of $t$, take the maximum over $[0, T]$
	\begin{align}
	 	&\| \varphi - \varphi_T \|_{\sigma, T} \leq \kappa \| \varphi - \varphi_T \|_{\sigma, T} +  e^{\kappa} \kappa \|x_0\| e^{(\alpha - \sigma)(T-t_0)}, \\
	 	\implies&\| \varphi - \varphi_T \|_{\sigma, T} \leq \frac{e^{\kappa} \kappa}{1 - \kappa} \|x_0\| e^{(\alpha - \sigma)(T-t_0)}.
	\end{align}
	Since $\sigma \in (\alpha+KL,\beta-KL)$, we have $\alpha - \sigma < 0$; therefore, $e^{(\alpha - \sigma)(T-t_0)} \rightarrow 0$, as $T\rightarrow \infty$.  This completes the proof.
\end{proof}
\begin{lem}\label{lem:tail of varphi}
	Let $\varphi$ be the fixed point of $\mathcal{T}$ for given $x\in X$ and $t_0\in \mathbb{R}$.  Then the following estimate holds
	\begin{equation}
		\label{equ:lem estimate}
		\sup_{t\geq T} e^{-\sigma (t - t_0)} \| \varphi(t) \| \leq e^{(\alpha - \sigma)(T - t_0)} \|x_0\| e^{\kappa}.
	\end{equation}
\end{lem}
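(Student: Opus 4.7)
The plan is to extract the tail bound directly from the fixed-point identity $\varphi=\mathcal{T}\varphi$ together with (H1)--(H2). First, applying the exponential bounds on $\Lambda_A,\Lambda_B$ and the Lipschitz hypothesis on $H=(F,G)$ with $H(t,0)=0$ yields the pointwise estimate
\[
\|\varphi(t)\| \le K\|x_0\|e^{\alpha(t-t_0)} + KL\int_{t_0}^t e^{\alpha(t-s)}\|\varphi(s)\|\,ds + KL\int_t^\infty e^{\beta(t-s)}\|\varphi(s)\|\,ds.
\]
I would then multiply by $e^{-\sigma(t-t_0)}$, so that $\psi(t):=e^{-\sigma(t-t_0)}\|\varphi(t)\|$ satisfies an integral inequality whose two kernels $e^{(\alpha-\sigma)(t-s)}$ and $e^{(\beta-\sigma)(t-s)}$ integrate, respectively, to $1/(\sigma-\alpha)$ over $[t_0,t]$ and $1/(\beta-\sigma)$ over $[t,\infty)$; multiplied by $KL$ these coefficients are no larger than $\kappa$.

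Next, I localize to $t\ge T$. Splitting the first integral at $T$, the portion over $[t_0,T]$ is controlled using the a priori bound $\psi(s)\le\|\varphi\|_\sigma$ (finite since $\varphi\in\mathcal{F}_\sigma$), yielding a decaying contribution of the form $\kappa\|\varphi\|_\sigma\, e^{(\alpha-\sigma)(t-T)}$. Over $[T,t]$ and $[t,\infty)$ I bound $\psi(s)\le M_T:=\sup_{s\ge T}\psi(s)$, each contributing at most $\kappa M_T$. Taking the supremum over $t\ge T$ produces a self-referential linear inequality
\[
(1-\kappa)M_T \le K\|x_0\|e^{(\alpha-\sigma)(T-t_0)} + \kappa\|\varphi\|_\sigma,
\]
into which I would insert the contraction bound $\|\varphi\|_\sigma\le K\|x_0\|/(1-\kappa)$ that follows from $\varphi=\mathcal{T}\varphi$ and the $\kappa$-contractivity of $\mathcal{T}$ on $\mathcal{F}_\sigma$.

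The delicate step --- and the one I expect to be the main obstacle --- is sharpening the resulting Neumann-series constant $1/(1-\kappa)^2$ to the exponential form $e^\kappa$ asserted in the lemma. My plan is to replace the single contraction estimate by a full Picard iteration $\varphi_{n+1}=\mathcal{T}\varphi_n$ starting from $\varphi_0=0$: one shows inductively that $\|\varphi_{n+1}(t)-\varphi_n(t)\|$ carries a factor $(KL)^n(t-t_0)^{n-1}/(n-1)!$ which, after integration against the exponential kernels, converts $\sum \kappa^n$ into $\sum \kappa^n/n! = e^\kappa$. Careful tracking of the initial $e^{(\alpha-\sigma)(T-t_0)}$ factor (present in the free term $\Lambda_A(\cdot,t_0)x_0$) across all iterates is required so that the tail decay is not lost in the first Picard step.
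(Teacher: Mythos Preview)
Your opening steps---writing out the fixed-point identity, applying (H1)--(H2), and passing to $\psi(t)=e^{-\sigma(t-t_0)}\|\varphi(t)\|$---coincide with the paper's. The divergence is in how the constant is extracted. The paper does \emph{not} localize to $[T,\infty)$, split integrals at $T$, or set up a self-referential sup inequality; it simply invokes Gronwall's inequality on the integral inequality for $\psi$ to obtain the \emph{pointwise} bound
\[
\psi(t)\le \|x_0\|\,e^{(\alpha-\sigma)(t-t_0)}e^{\kappa}\qquad\text{for all }t\ge t_0,
\]
and then restricts to $t\ge T$, using $\alpha-\sigma<0$ so the right-hand side is decreasing. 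Note also that the paper works with the $\max$-norm on $X\times Y$, so only one of the two integrals appears (via a $\max$), not their sum; this is why a one-sided Gronwall is applicable and why your bookkeeping should give $(1-\kappa)M_T$ rather than $(1-2\kappa)M_T$.

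The genuine gap in your plan is the proposed sharpening from $1/(1-\kappa)$ to $e^{\kappa}$ via Picard iteration. The factorial mechanism $\sum \kappa^n/n!$ you invoke is the Picard--Lindel\"of trick for kernels that are \emph{constant} in $s$: iterated integration of $1$ over $[t_0,t]$ produces $(t-t_0)^n/n!$. Here the kernels are $e^{(\alpha-\sigma)(t-s)}$ and $e^{(\beta-\sigma)(t-s)}$; iterating against these does not generate factorials---each iterate, after passing to the sup, contributes a full factor $KL/(\sigma-\alpha)$ or $KL/(\beta-\sigma)\le\kappa$, and you are back to the geometric series $\sum\kappa^n=1/(1-\kappa)$. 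The two-sided integral over $[t,\infty)$ only compounds this. So the ``delicate step'' you flag is not merely delicate: the mechanism you propose for it does not produce the claimed exponent. The paper's intended route is the direct Gronwall argument above, bypassing the contraction detour entirely.
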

\begin{proof}
	Since $\varphi$ is the fixed point of $\mathcal{T}$, for any $t\geq t_0$,
	\begin{align*}
		&\varphi(t) = \Lambda_A(t,t_0)x_0 + \int_{t_0}^t \Lambda_A(t,s)\;F(\varphi(s))\;ds - \int_t^{\infty} \Lambda_B(t,s)\;G(\varphi(s))\;ds\\
		\implies& e^{-\sigma (t-t_0)} \| \varphi(t) \| \leq e^{(\alpha-\sigma)(t-t_0)}\|x_0\| + KL \max\bigg\{ \int_{t_0}^t e^{(t-s)(\alpha-\sigma)} e^{-\sigma (s - t_0)} \| \varphi(s) \|  \;ds,\;  \\ &\int_t^{\infty} e^{(t-s)(\beta - \sigma)} e^{-\sigma (s - t_0)} \| \varphi(s) \| \;ds\bigg\}.
	\end{align*}
	By the Gronwall inequality, we have
	\begin{equation}
		e^{-\sigma (t-t_0)} \| \varphi(t) \| \leq  e^{(\alpha-\sigma)(t-t_0)}\|x_0\| e^{\kappa},
	\end{equation}
	and hence, the \eqref{equ:lem estimate} follows.
\end{proof}
At this point, we have shown that as $T\rightarrow\infty$, the solution of \eqref{equ:stable bvp T} converges to the solution of \eqref{equ:stable bvp}.  We conclude this section by showing how Theorem \ref{thm:conv depend T} can be used in practice.  There are two main sources of errors in the computation---numerical methods and convergence dependence of $T$.  Note that they are independent to each other.  Thus, if $T$ is too small, $\varphi_T$ is far from the $\varphi$, meaning the error induced by the convergence dependence of $T$ dominates the error by numerical methods.  For the most of time, the analytic manifolds are unavailable; thus, one could hardly know whether such $T$ is large enough, i.e. whether the error is dominated by the truncation error.  Thanks to Theorem \ref{thm:conv depend T}, we are able to give a crude lower bound of $T$ so that such truncation error is negligible.  Such error bound can be found \eqref{equ:varphi - varphiT}.  Let {\tt tol} be the tolerance, which can be machine zero or the order of numerical methods.  We need
\begin{equation}
	\frac{e^{\kappa} \kappa}{1 - \kappa} \|x_0\| e^{(\alpha - \sigma)(T-t_0)} \leq {\tt tol}.
\end{equation}
Since $\sigma$ is any number between $(\alpha+KL, \beta-KL)$, we choose $\sigma = (\alpha + \beta )/2$.  Therefore, by the fact that $\alpha - \beta < 0$, 
\begin{align*}
		& C \|x_0\| e^{\frac{\alpha-\beta}{2} (T-t_0)} \leq {\tt tol} \\
\implies& \log(C\|x_0\|) + \frac{\alpha-\beta}{2}(T-t_0) \leq \log({\tt tol}) \\
\implies& T \geq t_0 +  \frac{2}{\alpha-\beta} \log(\frac{{\tt tol}}{C\|x_0\|}),
\end{align*}
where $C = \frac{e^{\kappa} \kappa}{1 - \kappa}$.

\section{Numerical Results}\label{numericalresults}

%

\subsection{Two-dimensional nonautonomous test problem}

Consider the two dimensional ODE 
\begin{equation}\label{2Dtest}
\left\{ \begin{array}{c} \dot{v}= w -xy\cos(t)+(-y+x^2-2y^2+\sigma\cos(t))\sin(t) \\
	\dot{w} = -v + xy\sin(t)+(-y+x^2-2y^2+\sigma \cos(t))\cos(t)
\end{array}, \right.
\end{equation}
where $x = v\cos(t)-w\sin(t)$ and $y=v\sin(t)+w\cos(t)$.  This example is a variation of \cite{AJRoberts}, where we make a time dependent rotating transformation.    For this problem the inertial manifold is given by the approximate equality
\begin{equation}\label{2Dim}
v\sin(t) + w\cos(t)-(v\cos(t)-w\sin(t))^2-0.5\sigma (\cos(t)+\sin(t)) \approx 0.
\end{equation}
In the original example \cite{AJRobterts}, an explicit second order approximation in Taylor's series of the manifold is derived, and in our example, because of the transformation, we obtain a similar second order approximation for our problem \eqref{2Dim} as a reference.  We point out here that our method is more accurate than second order approximation in Taylor's series, but because of lack of closed form of analytic manifolds, the ``error'' term we used in this example is the difference between the computed results and the \eqref{2Dtest}.
We use this example to illustrate several important characteristics of our algorithm.  Figure 1 shows how the convergence of Algorithm \ref{IMalg} depends on the value of $T$.  As $T$ increases, initially the error of $v$ and $w$ from satisfying the approximate inertial manifold equations decreases at an exponential rate.  At $T\approx 1.7$ the error reaches a minimum and for $T > 1.7$ after which the error increases and remains above this minimal value.\\

Figure 2 shows the results for an experiment with \ref{IMalg} where we fix a value of $T$ and then vary the initial condition $\hat{w}_1(-T)$ we use to compute $\hat{w}_1(t)$ to construct the orthogonal transformation $Q(t)$.  The results show that for different initial conditions $\hat{w}_1(-T)$ we obtain different approximate trajectories each approximate trajectories converges to a point on the inertial manifold at a different rate.\\

Figures 3 and 4 show the results for an experiment with \ref{IMalgts} where we plot an approximate trajectory found using our time-stepping algorithm \ref{IMalgts} using the explicit Euler method for the time-stepping.  For the experiment we time-step from the point on the inertial manifold found in the experiment from Figure 1 where the error is minimized if we use $T=1.7$ so that we can try and maintain the accuracy of our approximated trajectory to the inertial manifold.  We see that for the step-sizes $\Delta t = 1E-2$ and $\Delta t = 1E-3$ that our time-stepping method at first retains the initial accuracy of the first approximated point to the inertial manifold.  However after several time steps the error of the approximated trajectory from the inertial manifold loses becomes less accurate by a factor of $10$.
\begin{figure}[h!]
\begin{center}
\includegraphics[scale=0.55]{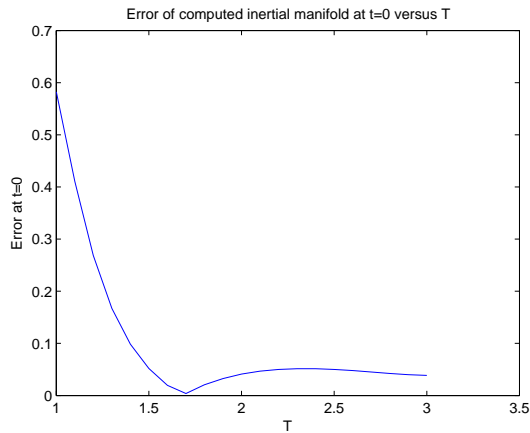}
\end{center}
\caption{Plot of the error of the computed $v$ and $w$ from satisfying the approximate inertial manifold equations at time $t=0$ versus $T$.   For each computation we used $\hat{w}_1(-T)=0.3$, $\sigma=0.1$, and $v(0)=1$ and error tolerances in the boundary value problem solver are set to $1E-3$.   The minimum error obtained is 3.863E-3 at $T=1.7$.}
\end{figure}

\begin{figure}[h!]
\begin{center}
\includegraphics[scale=0.55]{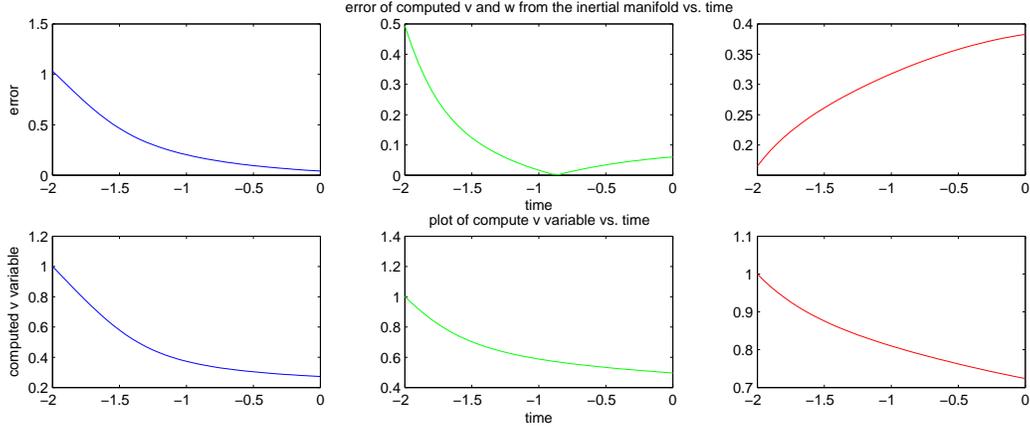}
\end{center}
\caption{First row: Pointwise plot of the error of the computed $v$ and $w$ from satisfying the left hand side of \eqref{2Dim} where from left to right we use initial conditions $\hat{w}_1(-T)=0.3$,$\hat{w}_1(-T)=0.6$, $\hat{w}_1(-T)=0.9$.  Second row:  Plot of the computed $v$ versus time for several values of $\hat{w}_1(-T)$ where from left to right we use initial conditions $\hat{w}_1(-T)=0.3$,$\hat{w}_1(-T)=0.6$, $\hat{w}_1(-T)=0.9$.  For each computation we used $T=2$ and $v(0)=1$ and and error tolerances in the boundary value problem solver are set to $1E-3$.}
\end{figure}

\begin{figure}[h!]
\includegraphics[scale=0.55]{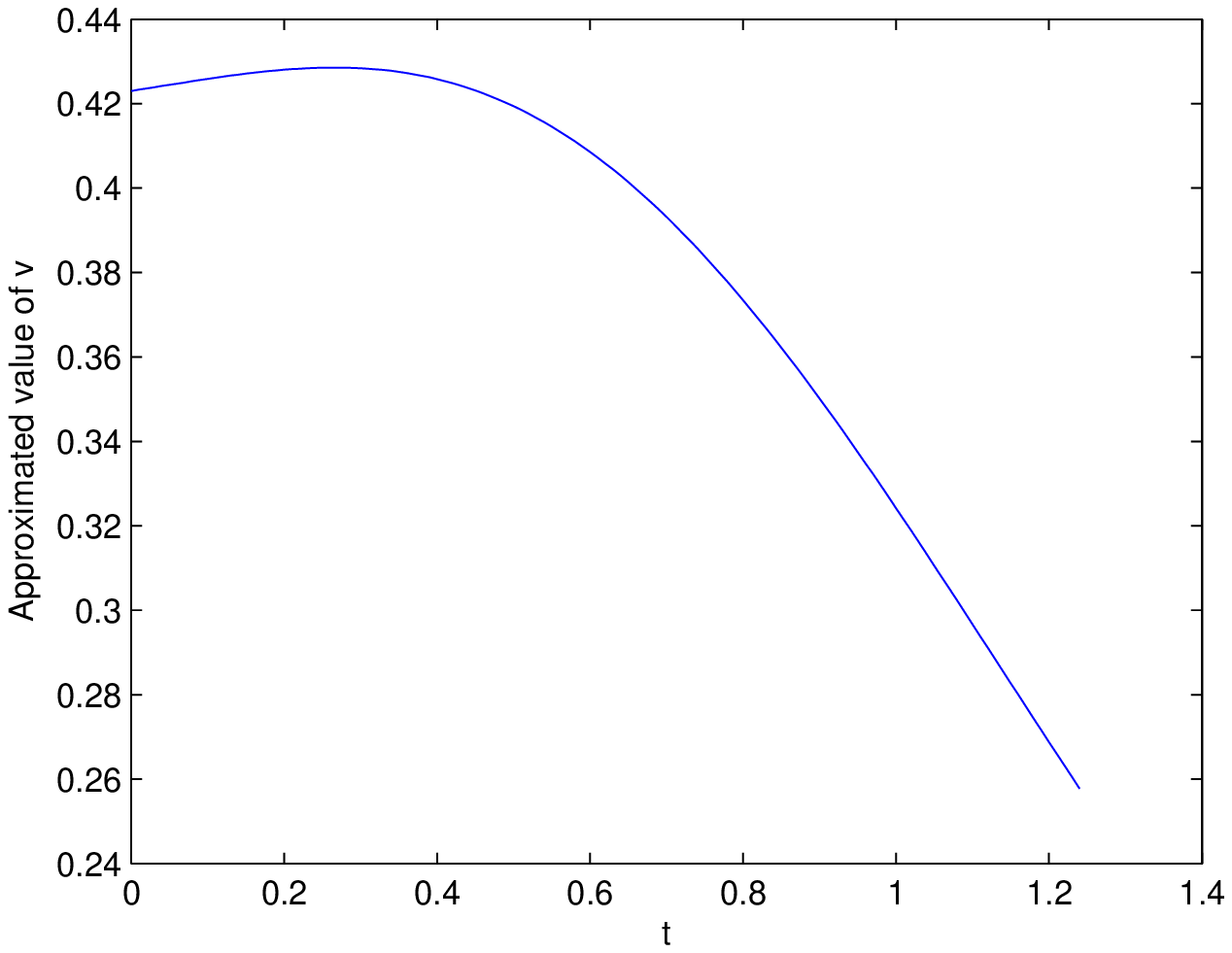}\includegraphics[scale=0.55]{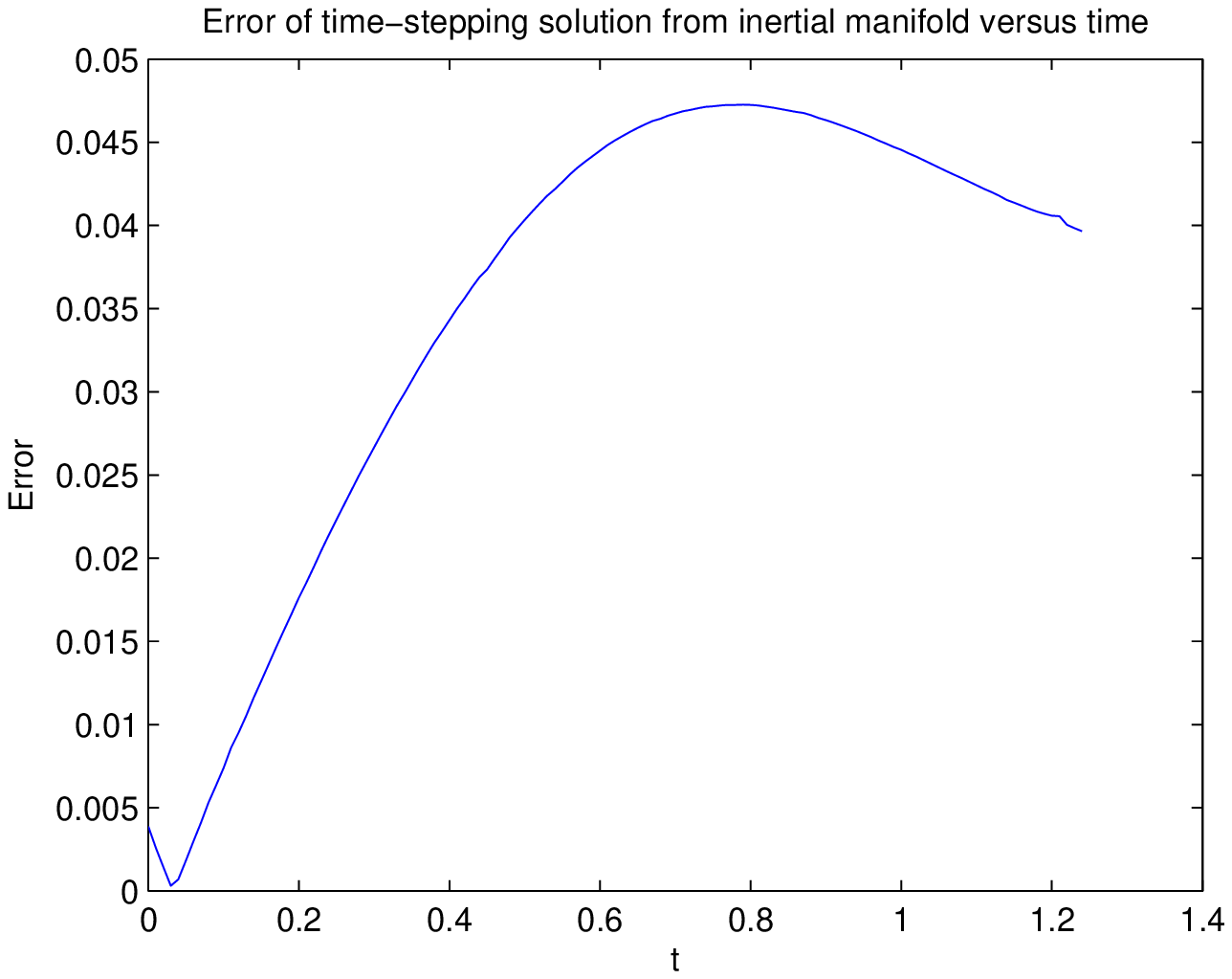}
\caption{Results of an experiment using time-stepping algorithm using explicit Euler method with step-size $1E-2$.  Left:  Plot of the computed $v$ versus. Right: Pointwise plot of the error of the computed $v$ and $w$ from satisfying the left hand side of \eqref{2Dim} for several values.   For the computation we used $T=1.7$ and $\hat{w}_i(-T)=0.3$.}
\end{figure}
\begin{figure}[h!]
\includegraphics[scale=0.55]{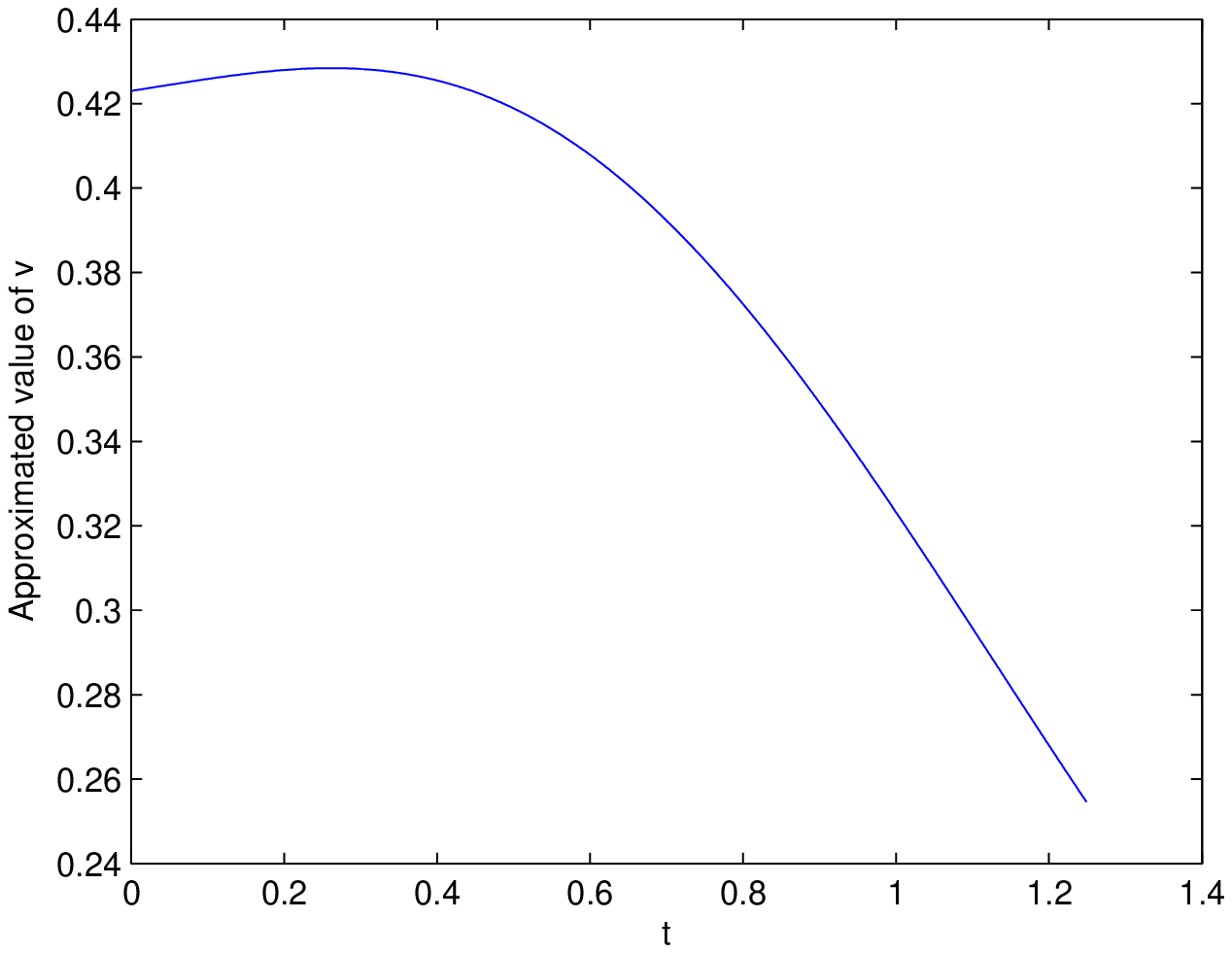}\includegraphics[scale=0.55]{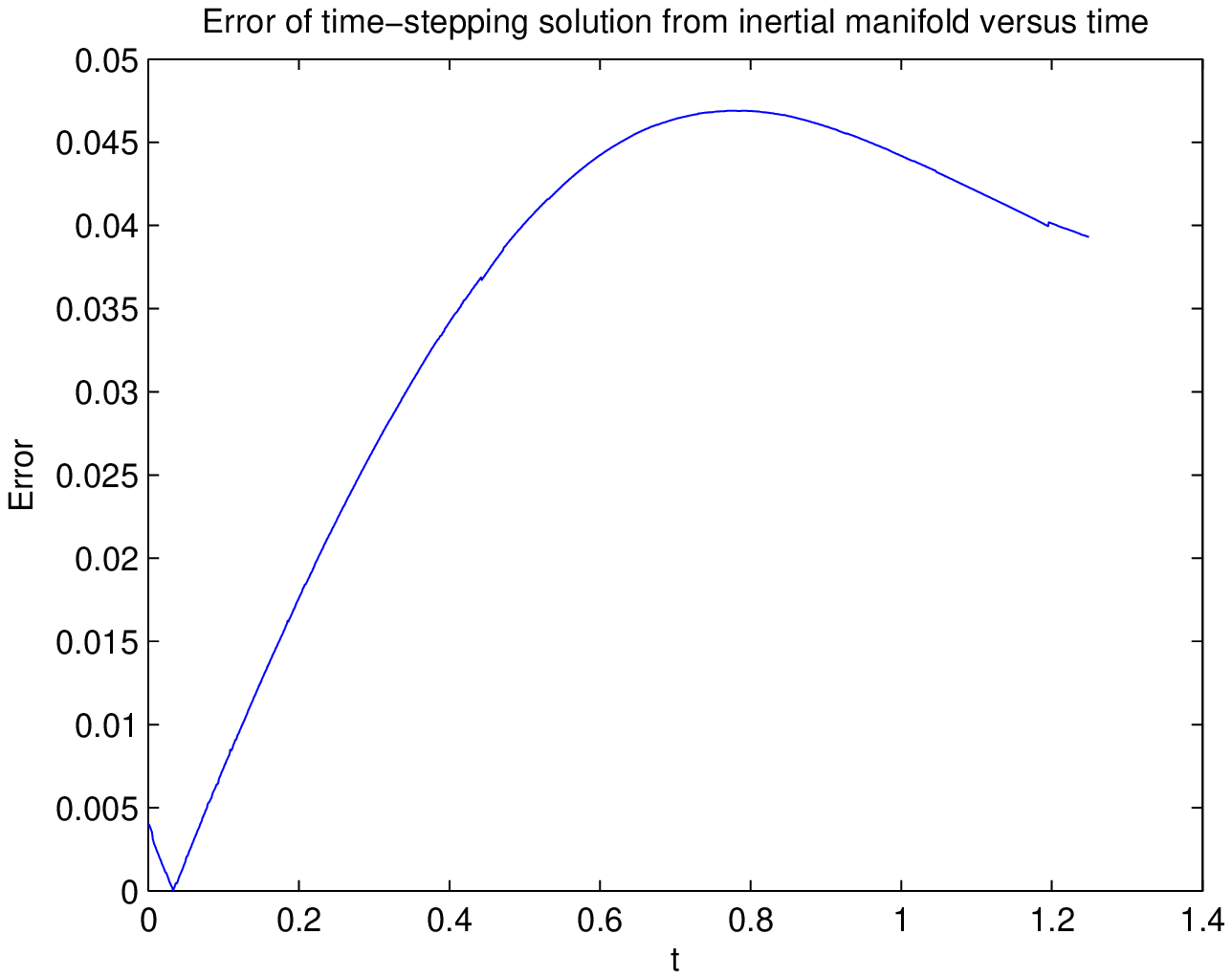}
\caption{Results of an experiment using time-stepping algorithm using explicit Euler method with step-size $1E-3$.  Left:  Plot of the computed $v$ versus. Right: Pointwise plot of the error of the computed $v$ and $w$ from satisfying the left hand side of \eqref{2Dim} for several values.   For the computation we used $T=1.7$ and $\hat{w}_i(-T)=0.3$.}
\end{figure}

\subsection{KSE Equation}

As a second example consider the Kuramoto-Sivashinsky equation
in the form
\begin{equation}\label{eq:ourKSE}
\pd{\ut}{\tau} + 4\pd{^4\ut}{y^4} +
\vartheta\Bigl[\pd{^2\ut}{y^2} + \ut\pd{\ut}{y} \Bigr]= 0\;,
\end{equation}
with $\ut(y,t)=\ut(y+2\pi,t)$, and $\ut(y,t)=-\ut(-y,t)$.
With the change of variables
$$
-2w(s,y)=\ut(\xi s / 4,y), \qquad  \xi=\frac{4}{\vartheta}\;.
$$
\eqref{eq:ourKSE} can be written as
\begin{equation}\label{eq:CCP}
w_s=(w^2)_y-w_{yy}-\xi w_{yyyy}\;.
\end{equation}
All computations were performed for $\xi=0.02991$,  $\vartheta=133.73454$,
one of the parameter values considered in \cite{DJRV,DJVV11}.  We consider a spectral discretization in Fourier space using a standard Galerkin truncation (see \cite{DJRV} for more details).  Figure 5 shows plots of an experiment where we use Algorithm \ref{IMalg} to approximate a point on the inertial manifold and then use the Matlab's ode45 initial value problem solver with point as its initial condition to see how close this point is to being on the inertial manifold for various values of $T$.  These plots illustrate that approximation of a point on inertial manifold found using Algorithm \ref{IMalg} initially improves as we increase $T$ and after a certain value this approximation fails to improve.

\begin{figure}[h!]
\begin{center}
\includegraphics[scale=0.5]{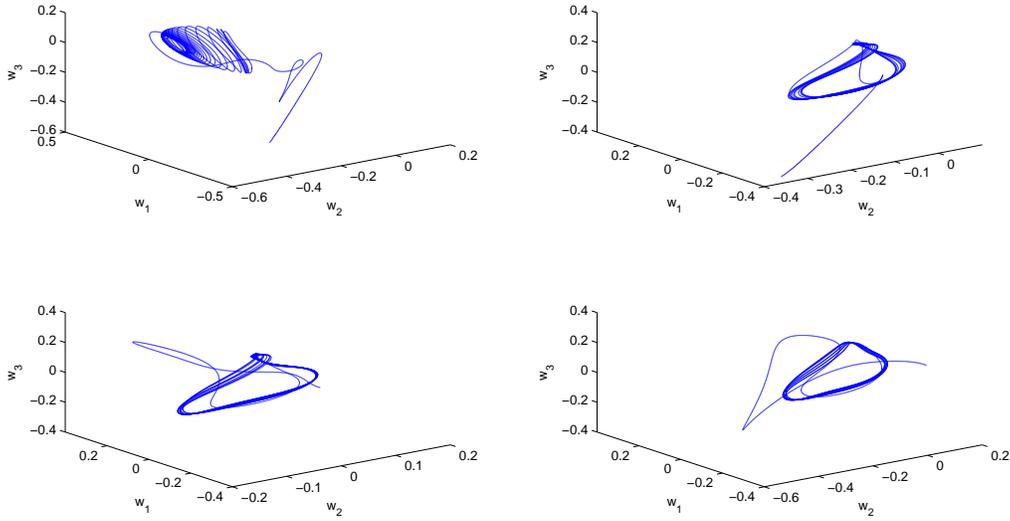}
\end{center}
\caption{Plot of the first three coordinates of the solution on $[0,10]$ using the output of Algorithm \ref{IMalg} as the initial condition using various values of $T$.  Top left is $T = 0.001$, top right is $T = 0.01$, bottom left is $T=.08$, and bottom right is $T=0.1$.   For each plot we use $p=6$, error tolerances of $1E-3$ in the boundary value problem solver and $1E-4$ in the initial value problem solver.  The initial boundary conditions used were $y(0) =\frac{1}{\sqrt{6}}(-1,1,-1,1,-1,1)^T$, $\hat{w}_i(-T)=0$ for $i=1,\hdots,p$, and $x(-T)=(0,\hdots,0)^T \in \mathbb{R}^9$.  The results displayed are plots of the first three coordinates of the solution . }
\end{figure}

\subsection{Two layer Lorenz}

Consider the following 2 layer Lorenz system that appears in \cite{Lor96,FV-E04} given by the equations
\begin{equation}\label{eq:2LL}
\left\{\begin{array}{c}
\dot{x}_k = x_{k-1}(x_{k+1}-x_{k-2})-x_k + F + z_k, \quad k=1,\hdots, K \\
\dot{y}_{j,k} = \epsilon^{-1} \left(y_{j+1,k}(y_{j-1,k}-y_{j+2,k})-y_{j,k} + h_y x_k \right), \quad j=1,\hdots, J
\end{array}
\right.
\end{equation}
where we take $K=5$, $J=4$, $\epsilon=0.5$, $h_x = -1$, $h_y = 1$, and $z_k = \frac{h_x}{J}\sum_{j}y_{j,k}$.  The approximate real parts of the eigenvalues of the linearization of this system about the origin lie in the interval $[-32,-1]$.  We find that there are $8$ eigenvalues with real part less than $-15$ and $17$ eigenvalues with real part greater than approximately $-8$.  This suggests a splitting of the system with $p=8$, as opposed to splitting the system with $p=5$ into $x_k$ and $y_{j,k}$ as suggested by the form of the equations.  In Figure 6 we display the results of an experiment where we use Algorithm \ref{IMalg} to approximate a point on the inertial manifold and then use the Matlab's ode45 initial value problem solver with point as its initial condition to see how close this point is to being on the inertial manifold for various values of $T$ and $p$.

\begin{figure}[h!]
\begin{center}
\includegraphics[scale=0.6]{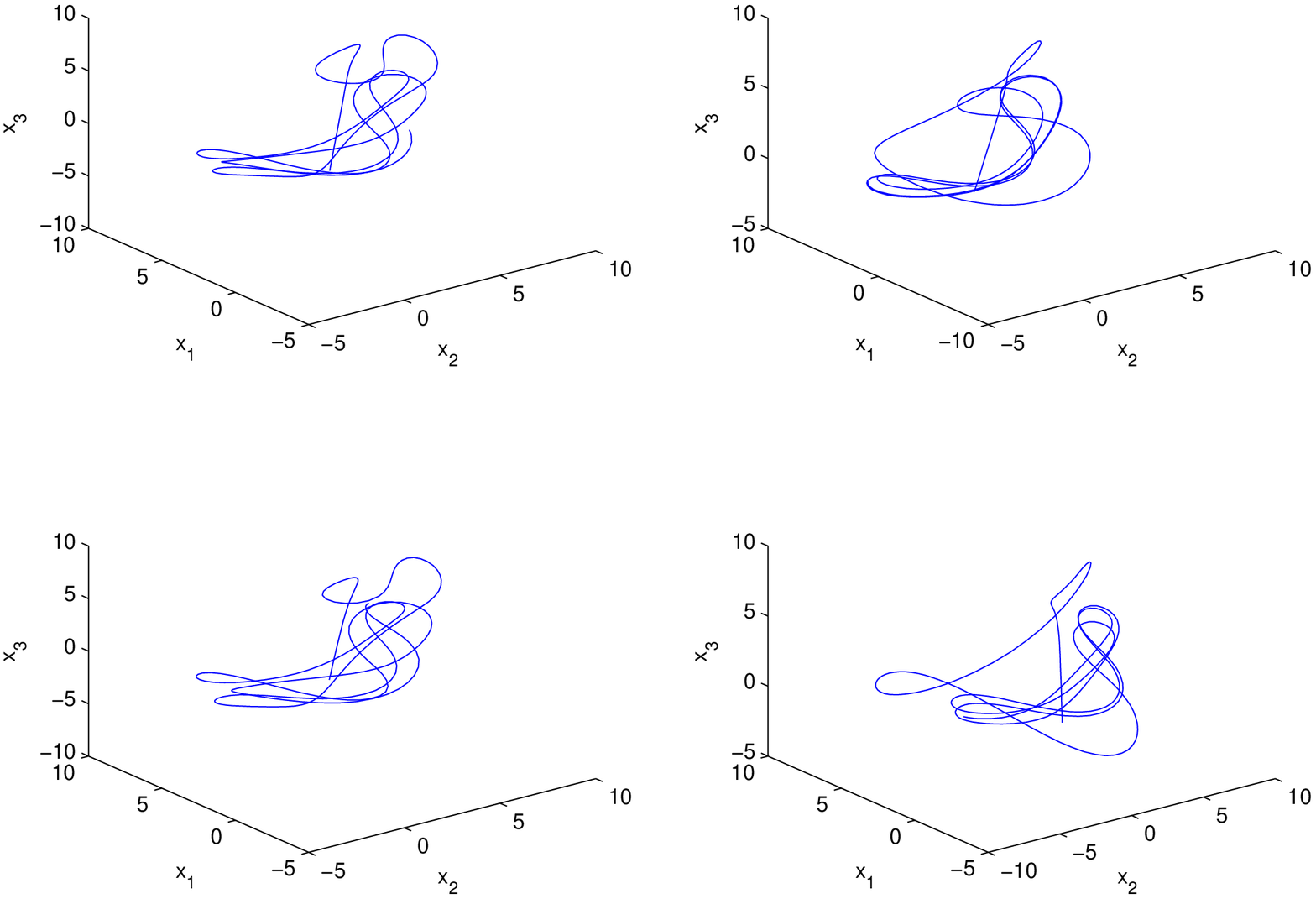}
\end{center}
\caption{Plot of the first three coordinates of the solution on $[0,6]$ using the output of Algorithm \ref{IMalg} as the initial condition using various values of $T$ and $p$.  Top left is $T = 0.001$ and $p=8$, top right is $T = 0.01$ and $p=8$, bottom left is $T=.01$ and $p=5$, and bottom right is $T=0.1$ and $p=12$.   For each plot we use error tolerances of $1E-3$ in the boundary value problem solver and $1E-4$ in the initial value problem solver.  The initial boundary conditions used were $y(0) =\frac{1}{\sqrt{6}}(-1,1,-1,1,-1,1)^T$, $\hat{w}_i(-T)=0$ for $i=1,\hdots,p$, and $x(-T)=(0,\hdots,0)^T \in \mathbb{R}^9$.  The results displayed are plots of the first three coordinates of the solution. }
\end{figure}

\section{Discussion}\label{Discuss}

In this paper we have developed a boundary value formulation for the computation of trajectories on inertial manifolds of nonlinear systems that satisfy a gap condition.  A point on the inertial manifold is found as the solution of a boundary value problem with a boundary term at $-\infty$.  A trajectory on the inertial manifold is approximated by using a standard initial value problem solver that evaluates the right-hand side of the differential equation by calling the boundary value problem solver.  We have applied our method to a variety of challenging problems to demonstrate that our algorithm is useful for the computation of trajectories on the inertial manifolds.  However, our method suffers from the drawback of needing to properly select a value of $T \approx \infty$ that is neither too large nor too small and needing to select boundary conditions that allows us to efficiently construct the decoupling transformation $Q(t)$.  

There is still much work to be done on the analysis and development of our algorithms.  We would like to find better bounds on the value of $T \approx \infty$ so that we can minimize or control the error in computing points on the inertial manifold without access to any exact equations describing the manifold or the exact solution of the differential equation.  Additionally we would like to investigate the interaction of the local error of the time-stepping method that solves the initial value problem with the error of the method that approximate points on the inertial manifold so that we can use $T$ and the step-size $\Delta t$ of the time-stepping method to control the error of the approximate trajectory on the manifold.

\bibliographystyle{plain}
\bibliography{decoup}

\end{document}